\documentclass{amsart}
\usepackage{setspace}
\usepackage{a4}
\usepackage{amsthm}
\usepackage{latexsym}
\usepackage{amsfonts}
\usepackage{graphicx}
\usepackage{textcomp}
\usepackage{cite}
\usepackage{enumerate}
\usepackage{amssymb}
\usepackage{hyperref}
\usepackage{amsmath}
\usepackage{tikz}
\usepackage[mathscr]{euscript}
\usepackage{mathtools}
\newtheorem{theorem}{Theorem}[section]

\newtheorem{example}[theorem]{Example}

\title{This is the title}
\usepackage{fancyhdr}

\begin{document}
\hrule\hrule\hrule\hrule\hrule
\vspace{0.3cm}	
\begin{center}
{\bf\large{{Non-Archimedean Tarski-Maligranda Inequalities}}}\\
\vspace{0.3cm}
\hrule\hrule\hrule\hrule\hrule
\vspace{0.3cm}
\textbf{K. Mahesh Krishna}\\
School of Mathematics and Natural Sciences\\
Chanakya University Global Campus\\
NH-648, Haraluru Village\\
Devanahalli Taluk, 	Bengaluru  North District\\
Karnataka State, 562 110, India\\
Email: kmaheshak@gmail.com\\

Date: \today
\end{center}
\hrule\hrule
\vspace{0.5cm}
\textbf{Abstract}: In 1930, Tarski observed that 
\begin{align*}
	\bigg||r|-|s|\bigg|=|r-s|+ |r+s|-(|r|+|s|), \quad \forall r, s \in \mathbb{R}.
\end{align*}
 In 2008, Maligranda converted the previous equality into inequalities that are valid in every normed linear space. We derive non-Archimedean versions of Tarski-Maligranda inequalities. Difference between Archimedean and non-Archimedean inequalities is surprising.
 \\
\textbf{Keywords}: Normed linear space, Non-Archimedean linear space.\\
\textbf{Mathematics Subject Classification (2020)}:  12J25, 46S10.\\

\hrule

\hrule
\section{Introduction}
It is universally known that 
\begin{align}\label{TI}
	\bigg||r|-|s|\bigg|\leq \min\{|r-s|, |r+s|\}, \quad \forall r, s \in \mathbb{R}.
\end{align}
In 1930, Tarski noticed a strengthening of Inequality (\ref{TI}) (Page 688, \cite{TARSKI}):
\begin{align}\label{TE}
		\bigg||r|-|s|\bigg|=|r-s|+ |r+s|-(|r|+|s|), \quad \forall r, s \in \mathbb{R}.
\end{align}
It is easy to see that Equality (\ref{TE}) fails for complex numbers. In 2008, Maligranda showed that one can generalize Equality (\ref{TE}) by deriving  inequalities that hold for any normed linear space (NLS) \cite{MALIGRANDA}.
\begin{theorem} \cite{MALIGRANDA} \label{MT} (\textbf{Tarski-Maligranda Inequalities})
Let $\mathcal{X}$ be a NLS. Then for all $x, y \in \mathcal{X}\setminus\{0\}$, 
\begin{align}\label{MI1}
	\bigg|\|x\|-\|y\|\bigg|&\leq \|x+y\|+\|x-y\|-(\|x\|+\|y\|)\leq \min\{\|x-y\|, \|x+y\|\}
\end{align}
and 
\begin{align}\label{MI2}
	\bigg|\|x\|-\|y\|\bigg|\leq \|x\|+\|y\|-\bigg|\|x+y\|-\|x-y\|\bigg|.
\end{align}
\end{theorem}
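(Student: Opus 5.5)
The plan is to reduce every inequality in the statement to the triangle inequality by a suitable choice of which two vectors to combine. Before doing so, I will use the symmetry $x \leftrightarrow y$, together with $\|x-y\|=\|y-x\|$, to assume without loss of generality that $\|y\|\leq \|x\|$. Then $\bigl|\|x\|-\|y\|\bigr| = \|x\|-\|y\|$ and $\min\{\|x\|,\|y\|\}=\|y\|$. The hypothesis $x,y\neq 0$ will not actually be needed.

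\textbf{Left inequality in (\ref{MI1}).} Under the normalisation, the claim $\|x\|-\|y\|\leq \|x+y\|+\|x-y\|-\|x\|-\|y\|$ rearranges to
\begin{align*}
2\|x\|\leq \|x+y\|+\|x-y\|.
\end{align*}
This is the triangle inequality applied to the identity $2x=(x+y)+(x-y)$.

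\textbf{Right inequality in (\ref{MI1}).} It is enough to show that the middle quantity is bounded by each of $\|x-y\|$ and $\|x+y\|$ separately.
\begin{itemize}
\item The bound $\|x+y\|+\|x-y\|-\|x\|-\|y\|\leq \|x-y\|$ is equivalent to $\|x+y\|\leq \|x\|+\|y\|$, which is the triangle inequality.
\item The bound by $\|x+y\|$ is equivalent to $\|x-y\|\leq \|x\|+\|-y\|$, which is again the triangle inequality.
\end{itemize}
Taking the minimum of the two bounds gives the right-hand inequality.

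\textbf{Inequality (\ref{MI2}).} Since $\|x\|+\|y\|-\bigl|\|x\|-\|y\|\bigr| = 2\min\{\|x\|,\|y\|\}=2\|y\|$, inequality (\ref{MI2}) is equivalent to
\begin{align*}
\bigl|\|x+y\|-\|x-y\|\bigr|\leq 2\|y\|.
\end{align*}
This is the reverse triangle inequality for the pair $x+y$, $x-y$, because $(x+y)-(x-y)=2y$.

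There is no real obstacle in this argument. The only point needing care is the algebraic rewriting of (\ref{MI2}) via $a+b-|a-b|=2\min\{a,b\}$. One must also make sure that the WLOG normalisation is legitimate, which it is because every expression in the statement is symmetric in $x$ and $y$.
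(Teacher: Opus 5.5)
Your argument is correct, and you are right that $x,y\neq 0$ is never used. The paper only cites Maligranda for this statement, but its proof of the non-Archimedean analogue, Theorem~\ref{NTM}, follows exactly your scheme: the identities $\|x\|+\|y\|\pm\bigl|\|x\|-\|y\|\bigr| = 2\max\{\|x\|,\|y\|\}$, resp.\ $2\min\{\|x\|,\|y\|\}$, combined with the decompositions $2x=(x+y)+(x-y)$ and $2y=(x+y)-(x-y)$. The one cosmetic difference is that the paper bounds by both $\|x\|$ and $\|y\|$ and then takes the max or min, where you use symmetry to assume $\|y\|\leq\|x\|$.
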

We naturally seek non-Archimedean versions of Inequalities (\ref{MI1}) and (\ref{MI2}). In this note, we derive them.

\section{Non-Archimedean Tarski-Maligranda Inequalities}

Let $\mathbb{K}$ be a field. A map $|\cdot|: \mathbb{K} \to [0, \infty)$ is said to be a non-Archimedean  valuation  if following conditions hold.
\begin{enumerate}[\upshape(i)]
	\item If $\lambda  \in \mathbb{K}$ is such that $|\lambda|=0$, then $\lambda=0$.
	\item $|\lambda \mu|=|\lambda||\mu|$ for all $\lambda, \mu  \in \mathbb{K}$.
	\item (Ultra-triangle inequality) $|\lambda+\mu|\leq \max\{|\lambda|, |\mu|\}$ for all $\lambda, \mu \in \mathbb{K}$.
\end{enumerate}
In this case, $\mathbb{K}$ is called as non-Archimedean valued field \cite{SCHIKHOF}.  Let $\mathcal{X}$ be a vector space over a non-Archimedean valued field $\mathbb{K}$ with valuation $|\cdot|$. A map $\|\cdot\|: \mathcal{X} \to [0, \infty)$ is said to be a non-Archimedean norm if following conditions hold.
\begin{enumerate}[\upshape(i)]
	\item If $x \in \mathcal{X}$ is such that $\|x\|=0$, then $x=0$.
	\item $\|\lambda x\|=|\lambda|\|x\|$ for all $\lambda \in \mathbb{K}$, for all $x \in \mathcal{X}$.
	\item (Ultra-norm inequality) $\|x+y\|\leq \max\{\|x\|, \|y\|\}$ for all $x,y \in \mathcal{X}$.
\end{enumerate}
In this case, $\mathcal{X}$ is called as non-Archimedean linear space (NALS) \cite{GARCIASCHIKHOF}.   Following is non-Archimedean version of Theorem \ref{MT}.
\begin{theorem}\label{NTM}
(\textbf{Non-Archimedean Tarski-Maligranda Inequalities})
Let $\mathcal{X}$ be a NALS over a non-Archimedean valued field $\mathbb{K}$. Assume that the characteristic of the field $\mathbb{K}$ is not 2. Then for all $x, y \in \mathcal{X}$, 
\begin{align*}
	\bigg|\|x\|-\|y\|\bigg|&\leq \frac{2}{|2|}\max\{\|x-y\|, \|x+y\|\}-(\|x\|+\|y\|)\\
	&\leq \frac{2}{|2|}\max\{\|x\|, \|y\|\}-(\|x\|+\|y\|)
\end{align*}
and 
\begin{align*}
	\bigg|\|x\|-\|y\|\bigg|\leq \|x\|+\|y\|-\frac{2}{|2|}\bigg|\|x+y\|-\|x-y\|\bigg|.
\end{align*}	
In particular, if $|2|=1$, then 
\begin{align*}
\bigg|\|x\|-\|y\|\bigg|&\leq 2\max\{\|x-y\|, \|x+y\|\}-(\|x\|+\|y\|)\\
&\leq 2\max\{\|x\|, \|y\|\}-(\|x\|+\|y\|)
\end{align*}
and 
\begin{align*}
\bigg|\|x\|-\|y\|\bigg|\leq \|x\|+\|y\|-2\bigg|\|x+y\|-\|x-y\|\bigg|.
\end{align*}	
\end{theorem}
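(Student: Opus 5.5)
The plan is to rest everything on the two identities $2x=(x+y)+(x-y)$ and $2y=(x+y)-(x-y)$, combined with the ultra-norm inequality. Because the characteristic of $\mathbb{K}$ is not $2$, we have $2\neq 0$ in $\mathbb{K}$, so $|2|>0$ and the factor $\frac{2}{|2|}$ makes sense. Throughout, write $a=\|x\|$, $b=\|y\|$, $p=\|x+y\|$, $q=\|x-y\|$ and $M=\max\{p,q\}$. The only real-number fact needed is the elementary identity $|a-b|+a+b=2\max\{a,b\}$, together with its companion $a+b-|a-b|=2\min\{a,b\}$.

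\textbf{First chain.} From $2x=(x+y)+(x-y)$ and the ultra-norm inequality we get $|2|\,a=\|2x\|\leq M$. In the same way $2y=(x+y)-(x-y)$ gives $|2|\,b\leq M$. Hence $\max\{a,b\}\leq M/|2|$. Therefore
\begin{align*}
|a-b| = 2\max\{a,b\}-(a+b)\leq \frac{2}{|2|}M-(a+b),
\end{align*}
which is the first inequality. For the second inequality, the ultra-norm inequality gives $p\leq\max\{a,b\}$ and $q\leq \max\{a,|-1|\,b\}=\max\{a,b\}$, using $|-1|=1$. So $M\leq \max\{a,b\}$, and the second inequality follows immediately.

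\textbf{Third inequality.} Using $a+b-|a-b|=2\min\{a,b\}$, the claim is equivalent to
\begin{align*}
|p-q|\leq |2|\min\{a,b\}.
\end{align*}
This is the step where the non-Archimedean structure really enters. I would use the sharpened ultrametric fact that $\|u+v\|=\max\{\|u\|,\|v\|\}$ whenever $\|u\|\neq\|v\|$, and split into two cases.

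\emph{Case $p=q$.} The left side is $0$, so the inequality is trivial.

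\emph{Case $p\neq q$.} Apply the sharpened fact to $2x=(x+y)+(x-y)$ and to $2y=(x+y)+(-(x-y))$, noting $\|-(x-y)\|=q$. This yields $|2|a=M$ and $|2|b=M$, so $|2|\min\{a,b\}=M=\max\{p,q\}$. Since $p,q\geq 0$, we have $|p-q|\leq\max\{p,q\}$, which finishes this case.

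\textbf{Special case and main obstacle.} The ``in particular'' statements are just the case $|2|=1$ substituted into the above. I expect no genuine obstacle. The only point requiring care is the equality case of the ultra-norm inequality used in the third inequality. The naive bound $|2|\,a\leq M$ alone is not enough there, and one needs the dichotomy $p=q$ versus $p\neq q$ to get the equality $|2|a=|2|b=M$.
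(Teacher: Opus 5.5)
Your proof is correct. For the first chain it matches the paper's argument, namely $|2|\max\{\|x\|,\|y\|\}\le\max\{\|x+y\|,\|x-y\|\}$ (from $2x=(x+y)+(x-y)$ and $2y=(x+y)-(x-y)$) combined with $a+b+|a-b|=2\max\{a,b\}$. You also justify the second inequality of that chain via $\max\{\|x+y\|,\|x-y\|\}\le\max\{\|x\|,\|y\|\}$, a step the paper's proof never writes down.

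For the third inequality your route genuinely differs. The paper uses no case split. It applies the ordinary reverse triangle inequality $\bigl|\|u\|-\|v\|\bigr|\le\|u\pm v\|$ with $u=x+y$, $v=x-y$. In your notation this gives $|p-q|\le\|2x\|=|2|a$ and $|p-q|\le\|2y\|=|2|b$ directly, hence $|p-q|\le|2|\min\{a,b\}$. Your closing remark that the dichotomy $p=q$ versus $p\neq q$ is needed therefore overstates things. It is enough to bound the difference $|p-q|$ by the norm of a sum, rather than bounding $p$ and $q$ separately by $M$. That step does not even use the ultrametric inequality. This is why, taking $\mathbb{K}=\mathbb{R}$ with $|2|=2$, the same computation reproduces Maligranda's inequality (\ref{MI2}).

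Your isosceles-principle argument buys sharper structural information instead. Whenever $\|x+y\|\neq\|x-y\|$, it forces $\|x\|=\|y\|$ and $|2|\,\|x\|=\max\{\|x+y\|,\|x-y\|\}$. In that case the left-hand side of the third inequality is $0$, and the inequality reduces to the triviality $|p-q|\le\max\{p,q\}$.
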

\begin{proof}
	Let $x, y \in \mathcal{X}$. Then 
	\begin{align}\label{F1}
		\|x\|+\|y\|+\bigg|\|x\|-\|y\|\bigg|=2\max\{\|x\|, \|y\|\}.
	\end{align}	
We also have 
\begin{align}\label{A1}
	\max\{\|x-y\|, \|x+y\|\}\geq \|(x+y)+(x-y)\|=\|2x\|=|2|\|x\|
\end{align}
and 
\begin{align}\label{A2}
	\max\{\|x-y\|, \|x+y\|\}\geq \|(x+y)-(x-y)\|=\|2y\|=|2|\|y\|.
\end{align}
Inequalities (\ref{A1}) and (\ref{A2}) give 
\begin{align}\label{F2}
		\max\{\|x-y\|, \|x+y\|\}\geq |2|\max\{\|x\|, \|y\|\}.
\end{align}
Inequalities (\ref{F1}) and (\ref{F2}) give 
\begin{align*}
	\|x\|+\|y\|+\bigg|\|x\|-\|y\|\bigg|=2\max\{\|x\|, \|y\|\}\leq \frac{2}{|2|}\max\{\|x-y\|, \|x+y\|\}.
\end{align*}
Rearranging, 
\begin{align*}
\bigg|\|x\|-\|y\|\bigg|\leq \frac{2}{|2|}\max\{\|x-y\|, \|x+y\|\}-(\|x\|+\|y\|).
\end{align*}
We next see that 
	\begin{align}\label{E1}
	\|x\|+\|y\|-\bigg|\|x\|-\|y\|\bigg|=2\min\{\|x\|, \|y\|\}.
\end{align}	
Further, 
\begin{align}\label{G1}
	\bigg|\|x+y\|-\|x-y\|\bigg|\leq	\|(x+y)+(x-y)\|=\|2x\|=|2|\|x\|
\end{align}
and 
\begin{align}\label{G2}
	\bigg|\|x+y\|-\|x-y\|\bigg|\leq	\|(x+y)-(x-y)\|=\|2y\|=|2|\|y\|.
\end{align}
Inequalities (\ref{G1}) and (\ref{G2}) give 
\begin{align}\label{E2}
	\bigg|\|x+y\|-\|x-y\|\bigg|\leq	|2|\min\{\|x\|, \|y\|\}.
\end{align}
Inequalities (\ref{E1}) and (\ref{E2}) give 
\begin{align*}
		\bigg|\|x+y\|-\|x-y\|\bigg|\leq	|2|\min\{\|x\|, \|y\|\}= \frac{|2|}{2}\left(\|x\|+\|y\|-\bigg|\|x\|-\|y\|\bigg|\right).
\end{align*}
Rearranging, 
\begin{align*}
\bigg|\|x\|-\|y\|\bigg|\leq \|x\|+\|y\|-\frac{2}{|2|}\bigg|\|x+y\|-\|x-y\|\bigg|.
\end{align*}
\end{proof}
\begin{example}
	Let $\mathcal{X}$ be a NALS over $\mathbb{Q}_2$. We then have $|2|_2=1/2$. Theorem \ref{NTM} gives 
	\begin{align*}
		\bigg|\|x\|-\|y\|\bigg|&\leq \frac{2}{|2|_2}\max\{\|x-y\|, \|x+y\|\}-(\|x\|+\|y\|)\\
		&=4\max\{\|x-y\|, \|x+y\|\}-(\|x\|+\|y\|), \quad \forall x, y \in \mathcal{X}
	\end{align*}
and 
\begin{align*}
	\bigg|\|x\|-\|y\|\bigg|&\leq \|x\|+\|y\|-\frac{2}{|2|_2}\bigg|\|x+y\|-\|x-y\|\bigg|\\
	&=\|x\|+\|y\|-4\bigg|\|x+y\|-\|x-y\|\bigg|, \quad \forall x, y \in \mathcal{X}.
\end{align*}	
\end{example}
\begin{example}
	Let $p>2$ be a prime number. 	Let $\mathcal{X}$ be a NALS over $\mathbb{Q}_p$. We then have $|2|_p=1$. Theorem \ref{NTM} gives 
	\begin{align*}
		\bigg|\|x\|-\|y\|\bigg|&\leq \frac{2}{|2|_p}\max\{\|x-y\|, \|x+y\|\}-(\|x\|+\|y\|)\\
		&=2\max\{\|x-y\|, \|x+y\|\}-(\|x\|+\|y\|), \quad \forall x, y \in \mathcal{X}
	\end{align*}
	and 
	\begin{align*}
		\bigg|\|x\|-\|y\|\bigg|&\leq \|x\|+\|y\|-\frac{2}{|2|_p}\bigg|\|x+y\|-\|x-y\|\bigg|\\
		&=\|x\|+\|y\|-2\bigg|\|x+y\|-\|x-y\|\bigg|, \quad \forall x, y \in \mathcal{X}.
	\end{align*}	
\end{example}

\section{Conclusions}
\begin{enumerate}
	\item In 1930, Tarski observed an equality in triangle inequality for real numbers.
	\item In 2008, Maligranda converted equality by Tarski to  inequalities which are valid in every normed linear spaces.
\item In this note, we  derived non-Archimedean version of Tarski-Maligranda inequalities. 
\end{enumerate}

\section{Acknowledgments:} The author thanks the anonymous reviewer for his/her  suggestions, which improved the article.
 \bibliographystyle{plain}
 \bibliography{reference.bib}

\end{document}